\documentclass{article}

\usepackage{graphicx}
\usepackage[affil-it]{authblk}
\usepackage{tikz}
\usetikzlibrary{angles,quotes}
\usetikzlibrary{arrows, arrows.meta}
\usepackage{amssymb}
\usepackage{amsmath}
\usepackage{amsthm}
\usepackage{hyperref}
\usepackage{biblatex}
\usepackage{enumitem}
\usepackage{geometry}
\bibliography{references} 

\newtheorem{thm}{Theorem}[section]

\newtheorem{lemma}[thm]{Lemma}

\newtheorem{propn}[thm]{Proposition}
\theoremstyle{definition}

\theoremstyle{remark}
\newtheorem{remark}[thm]{Remark}

\title{Corner Regularity of Laplacian Eigenfunctions}
\author{Finlay Whitton}
\affil{Heriot-Watt University \& Maxwell Institute Graduate School}
\date{\today}
\begin{document}

\newcommand{\R}{\mathbb{R}}
\newcommand{\N}{\mathbb{N}}
\newcommand{\Z}{\mathbb{Z}}
\newcommand{\C}{\mathbb{C}}
\renewcommand{\d}{\partial}
\newcommand{\z}{\bar{z}}
\renewcommand{\O}{\Omega}
\renewcommand{\a}{\alpha}
\renewcommand{\b}{\beta}
\renewcommand{\th}{\theta}
\renewcommand{\l}{\lambda}
\renewcommand{\o}{\omega}
\renewcommand{\L}{\Lambda}
\newcommand{\eps}{\varepsilon}
\newcommand{\nn}{\nonumber}

\maketitle

\begin{abstract}
    We study the regularity of eigenfunctions of the Dirichlet Laplacian on domains with corners. The specific question we consider is whether the eigenfunctions can be smooth, e.g. trigonometric, on the closure of their domain. We determine how this regularity depends on the local geometry of the corner. Local asymptotic analysis provides detailed information about eigenfunctions near corners and allows us to quantify the H\"older regularity of the normal derivative. We demonstrate the limitations of some of the methods used to study these problems in finding a complete set of eigenvalues on polygons.
\end{abstract}

\section{Introduction}
The Dirichlet Laplacian operator on a domain $\O \subset \R^2$ is determined by its eigenvalues and eigenfunctions, namely pairs $(\l,u)\in\R\times C^2(\O)$ such that the equation
\begin{equation}
\label{eqn:evaluegen}
    \begin{cases}
        -\Delta u(x,y) = \l u(x,y) &(x,y)\in\O\\
        u(x,y) = 0&(x,y)\in\d\O,
    \end{cases}
\end{equation}
where $\Delta = \d_x^2 + \d_y^2$, admits a non-trivial solution. The spectral theorem implies that the eigenvalues may be arranged sequentially:
$$0<\l_1<\l_2\le \l_3 \le \cdots \nearrow \infty,$$ with $(\l_1,u_1)$ called the ground eigenpair. Despite the definition of a Laplacian eigenfunction requiring only two derivatives, all eigenfunctions belong to $C^\infty(\O)$ for any open, bounded domain $\O\subset\R^2$, with derivatives of all orders extending continuously to $C^\infty$ sections of the boundary (see, for example, \cite[Theorem 2.2.1,][]{TSG}). In this paper  we determine the precise number of derivatives that can be continuously extended to the prototypical example of a non-differentiable boundary point: a corner. 

Consequently, we prove that there are only four polygons whose ground eigenfunctions are infinitely differentiable on the closure of the domain\footnote{We define infinite differentiability on the closure of the domain as in Appendix A of \cite{Evans}:
$$C^k(\bar{\O}) = \{u\in C^k(\O) : D^\alpha u\text{ is uniformly continuous on }\O\text{ for all }|\alpha|\le k\},\quad C^\infty(\bar{\O}) = \bigcap_{k=0}^\infty C^k(\bar{\O}),$$ where the multi-index $\a=(\a_1,\dots,\a_n)\in\N_0^n$ defines the derivative $D^\a = \d_{x_1}^{\a_1}\cdots\d_{x_n}^{\a_n}$.}. These polygons are the rectangle and the equilateral, right isosceles, and hemi-equilateral triangles, which are shown in Figure \ref{fig:canonical}. Herein we refer to these polygons as \textit{canonical}.
\begin{figure}[h]
    \centering
    
        \begin{tikzpicture}
            \coordinate (A) at (0,{sqrt(3)});
            \coordinate (B) at (-1,0);
            \coordinate (C) at (1,0);
            \draw[thick] (A) -- (B) -- (C) -- cycle;
            \node at (0,-0.5) {$\left(\frac{\pi}{3},\frac{\pi}{3},\frac{\pi}{3}\right)$};
        \end{tikzpicture}   
    \quad
        \begin{tikzpicture}
            \coordinate (A) at (0,{sqrt(3)});
            \coordinate (B) at (0,0);
            \coordinate (C) at (1,0);
            \draw[thick] (A) -- (B) -- (C) -- cycle;
            \node at (0.5,-0.5) {$\left(\frac{\pi}{2},\frac{\pi}{3},\frac{\pi}{6}\right)$};
        \end{tikzpicture}
    \quad
        \begin{tikzpicture}
            \coordinate (A) at (0,{sqrt(3)});
            \coordinate (B) at ({sqrt(3)},0);
            \coordinate (C) at (0,0);
            \draw[thick] (A) -- (B) -- (C) -- cycle;
            \node at ({sqrt(3)/2},-0.5) {$\left(\frac{\pi}{2},\frac{\pi}{4},\frac{\pi}{4}\right)$};
        \end{tikzpicture}
    \quad
        \begin{tikzpicture}
            \coordinate (A) at (0,0);
            \coordinate (B) at (0,{sqrt(3)});
            \coordinate (C) at (1,{sqrt(3)});
            \coordinate (D) at (1,0);
            \draw[thick] (A) -- (B) -- (C) -- (D) -- cycle;
            \node at (0.5,-0.5) {$\left(\frac{\pi}{2},\frac{\pi}{2},\frac{\pi}{2},\frac\pi2\right)$};
        \end{tikzpicture}
    \caption{The Canonical Polygons}
    \label{fig:canonical}
\end{figure}
\begin{propn}
    \label{thm:cor}
    The following are equivalent.
    \begin{enumerate}
        \item The polygon $\O$ is canonical.
        \item All Laplacian eigenfunctions on $\O$ are finite sums of plane waves $$\sum_{k=1}^N a_k \cos(m_k x + n_k y) + b_k \sin(p_k x + q_k y) ,$$ with $a_k,b_k,m_k,n_k,p_k,q_k\in\R$.
        \item The ground eigenfunction is smooth on the closure of $\O$, i.e., $u_1\in C^\infty(\overline{\O})$.
    \end{enumerate}
\end{propn}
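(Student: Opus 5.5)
We prove the cycle of implications (1)$\Rightarrow$(2)$\Rightarrow$(3)$\Rightarrow$(1).

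\emph{(1)$\Rightarrow$(2).} We use the classical explicit spectral theory of the canonical polygons. For the rectangle, separation of variables gives a complete orthogonal basis of products $\sin(m\pi x/a)\sin(n\pi y/b)$. For the right isosceles and hemi-equilateral triangles, the domain tiles a rectangle or the equilateral triangle by reflection. Every Dirichlet eigenfunction then extends by odd reflection to an eigenfunction on the larger domain, so it is a finite combination of eigenfunctions there. For the equilateral triangle, Lamé's formulae give a complete set of eigenfunctions as finite sums of plane waves (as proved by Pinsky and by McCartin). Two points need care:
\begin{itemize}
\item Completeness of the listed families is needed, so that \emph{every} eigenfunction is covered.
\item Eigenspaces are finite dimensional, so a general eigenfunction is a finite linear combination of basis elements of that eigenvalue, hence still a finite sum of plane waves.
\end{itemize}
Products of sines reduce to sums of cosines by product-to-sum identities.

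\emph{(2)$\Rightarrow$(3).} Finite sums of plane waves are entire functions on $\R^2$. Their restrictions to $\O$ have all derivatives uniformly continuous on the bounded set $\O$, so $u_1\in C^\infty(\overline{\O})$.

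\emph{(3)$\Rightarrow$(1).} This is the main step. Fix a corner of interior angle $\a$ and use polar coordinates $(r,\th)$ centred there, with $0<\th<\a$. Local asymptotic analysis gives the standard corner expansion
$$u_1 \sim \sum_{j\ge1} c_j J_{j\pi/\a}(\sqrt{\l_1}\,r)\sin(j\pi\th/\a),$$
a Fourier–Bessel expansion valid in a small sector, since $u_1$ vanishes on both edges. The leading coefficient satisfies $c_1\neq0$ because $u_1>0$ in $\O$ (the ground state does not change sign), so the leading term $r^{\pi/\a}\sin(\pi\th/\a)$ dominates.

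If $u_1\in C^\infty(\overline{\O})$, its Taylor expansion at the corner shows that $u_1$ is, to every order, a sum of homogeneous polynomials. Hence the leading homogeneous part $r^{\pi/\a}\sin(\pi\th/\a)$ must be a homogeneous polynomial of degree $\pi/\a$. This forces $\pi/\a=n\in\N$. (If $\pi/\a$ is not an integer, then $D^k u_1$ with $k>\pi/\a$ blows up like $r^{\pi/\a-k}$ along a ray on which the angular factor is nonzero.) So every angle equals $\pi/n$. The main obstacle is making this step rigorous: one must justify the expansion, and one must check that higher terms with non-integer exponents cannot cancel the singular leading term. Since $c_1\ne0$ and the exponents $j\pi/\a$ are distinct, no such cancellation is possible.

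Finally, a polygon whose angles are all of the form $\pi/n$ must be a triangle or a rectangle. The angle sum is $(N-2)\pi$ and each angle is at most $\pi/2$ unless $n=1$, which is impossible. This gives $N\le4$, with equality only if all angles equal $\pi/2$. For triangles, $1/a+1/b+1/c=1$ has only the solutions $(3,3,3)$, $(2,4,4)$ and $(2,3,6)$. These are exactly the canonical polygons.
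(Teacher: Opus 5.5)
Your proof is correct and has the same skeleton as the paper's. The direction from canonical polygons to trigonometric eigenfunctions rests on classical explicit spectral theory; the paper simply cites Kuttler--Sigillito and McCartin. Smoothness of plane-wave sums is immediate. The converse uses the corner expansion of Lemma~\ref{thm:series}, the non-vanishing of the first coefficient by positivity of $u_1$, and the same Diophantine enumeration of angles $\pi/k_i$.

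The genuine difference is how you detect non-smoothness at a corner with $\pi/\a\notin\N$. The paper goes through Theorem~\ref{thm:thm}. It shows that $\d_\nu u_1$ fails to be $C^{\tilde k}$ for $\tilde k>k(\a)$, or is unbounded at a re-entrant corner. This relies on the differentiated asymptotics \eqref{eqn:kderiv} taken from Lehman, and the paper leaves implicit the passage from $\d_\nu u_1$ back to $u_1\in C^\infty(\overline{\O})$.

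You instead compare the Taylor polynomial of $u_1$ at the vertex with the undifferentiated leading term $\gamma r^{\pi/\a}\sin(\pi\th/\a)$, $\gamma\neq0$. The Taylor expansion is legitimate because the sector is star-shaped about its vertex. On the single ray $\th=\a/2$ this reduces to a one-variable fact: when $\pi/\a\notin\N$ and $K>\pi/\a$, $\gamma r^{\pi/\a}+o(r^{\pi/\a})$ cannot equal $\sum_{j\le K}b_jr^j+O(r^{K+1})$. Your route uses only the leading-order asymptotics of $u_1$ itself, handles convex and re-entrant corners at once, and addresses $C^\infty(\overline{\O})$ directly. The paper's longer route buys the sharp H\"older regularity of $\d_\nu u_1$, which is its main objective.

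Two small points to tighten. First, distinctness of the exponents $j\pi/\a$ does not by itself rule out interference from the infinite tail. What does is the uniform bound \eqref{eqn:summandbound}, which makes the tail $O(r^{2\pi/\a})$ exactly as in claim \textbf{(I)} of Lemma~\ref{lemma:2.2}. Second, your parenthetical about $D^k u_1$ blowing up like $r^{\pi/\a-k}$ would require Lehman's differentiated asymptotics, but your Taylor argument does not need it.
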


McCartin \cite[Theorem 4.1.2, ][]{McCartin} showed that \textit{1} and \textit{2} are equivalent. By focusing on the ground eigenfunction we can strengthen this result from `trigonometric' to smooth on the closure.

Away from corners we denote the outward-pointing unit normal vector field to the boundary $\d\O$ by $\nu$. We will precisely determine the regularity of the normal derivative $\d_\nu u = \nu\cdot \nabla u$, quantified by H\"older exponents\footnote{Recall that the space $C^{k,\b}(I)$ contains those functions that are $k$ times continuously differentiable on an interval $I$ and whose $k$th derivative is H\"older continuous with exponent $\b\in(0,1)$. That is, $f\in C^{k,\b}(I)$ if $f\in C^k(I)$ and $$\sup_{x\in I,\\ h>0} \left|\frac{f^{(k)}(x+h)-f^{(k)}(x)}{h^{\b}}\right| <\infty.$$}, from the local geometry. As a result, despite a discontinuity in the normal vector field $\nu$ at the corners, the normal derivative $\d_\nu u$ can be extended to a continuous function along the entire boundary of a convex polygon. For all eigenfunctions we can determine H\"older regularity of the the normal derivative. By restricting attention to the ground eigenfunction, we can conclude that the normal derivative belongs to no H\"older space of more regular functions. Our main result is the following.
\begin{figure}[h]
    \centering
    \begin{tikzpicture}
        \coordinate (A) at (0,0);
        \coordinate (B) at (4,0);
        \coordinate (C) at (1.5,2.5);
        \draw[thick] (C) -- (A) -- (B);
        \node at (1.8,1) {$\Omega$};
        \pgfmathsetmacro{\angleAC}{atan2(2.5,1.5)}
        \draw[thick] (A) ++(0:1.6) arc[start angle=0, end angle=\angleAC, radius=1.6];
        \draw[>=triangle 45, <->] (0,-0.2) -- (1.6,-0.2);
        \node at (0.8, -0.4) {$\rho$};
        \draw[thin] (A) ++(0:0.5) arc[start angle=0, end angle=60, radius=0.5];
        \node at (0.25,0.15) {$\a$};
        \node at (0.9,0.6) {$D$};
    \end{tikzpicture}
    \caption{$D\subset\Omega$}
    \label{fig:sector}
\end{figure}
Suppose $\O$ has a corner\footnote{A corner is a boundary point at which there exists a ball of positive radius $B_r$ such that $\d\O\cap B_r$ is the union of two straight line segments.} of angle $\a\in(0,\pi)\cup(\pi,2\pi)$. Let $\d_\nu u(r,0)$ and $\d_\nu u(r,\a)$ be the normal derivative of $u$ along the edges, where $r$ measures the distance from the corner.
\begin{thm}
    \label{thm:thm}
     For both $\th=0$ and $\th=\a$, there exists some $\rho >0$ such that the following holds.
    \begin{enumerate}
        \item For $\a\in(0,\pi)$, if $\pi/\a\in\N$ then $\d_\nu u(\cdot,\th) \in C^\infty([0,\rho))$.
        \item Otherwise, 
        $$\d_\nu u(\cdot,\th)\in C^{k(\a),\b(\a)}([0,\rho)), \text{ where }k(\a) = \lfloor\pi/\a\rfloor-1 \text{ and }\b(\a)= \pi/\a - \lfloor\pi/\a\rfloor\in(0,1).$$
        \item For the first eigenfunction, the choices of $k(\a)$ and $\beta(\a)$ are sharp, in the sense that $\d_\nu u_1(\cdot,\th) \notin C^{\tilde{k},\tilde{\b}}([0,\rho))$ for any $\tilde{k}>k(\a)$ or $\tilde{\b}>\b(\a)$. Otherwise, if $\a\in(\pi,2\pi)$ then the normal derivative of the first eigenfunction cannot be extended to a continuous function. That is,
        $$\lim_{r\to 0} |\d_\nu u_1(r,\th)|=\infty.$$
    \end{enumerate}
\end{thm}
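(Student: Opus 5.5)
We will work in polar coordinates $(r,\th)$ centred at the corner, with the sector $D=\{0<r<\rho,\ 0<\th<\a\}\subset\O$ as in Figure \ref{fig:sector}. The plan rests on the classical Fourier–Bessel expansion. On $D$ the eigenfunction satisfies $-\Delta u=\l u$ with $u(r,0)=u(r,\a)=0$. Expanding $u(r,\cdot)$ in the Dirichlet sine basis $\sin(j\pi\th/\a)$, $j\in\N$, each coefficient $c_j(r)$ solves Bessel's equation of order $\nu_j=j\pi/\a$. Since $u$ is bounded near the corner (it is continuous up to the boundary), the singular solution $Y_{\nu_j}$ is excluded. Hence
\begin{equation*}
u(r,\th)=\sum_{j\ge1} a_j J_{j\pi/\a}(\sqrt{\l}\,r)\sin(j\pi\th/\a),
\end{equation*}
and this series converges together with all its derivatives for $0\le r<\rho'<\rho$. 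To see this, note that $a_j J_{\nu_j}(\sqrt\l\rho)$ are the sine coefficients of $u(\rho,\cdot)$, so they are bounded. Combining this with the bound $|J_\nu(x)|\le (x/2)^\nu/\Gamma(\nu+1)$ and the lower bound $J_\nu(x)\gtrsim (x/2)^\nu/\Gamma(\nu+1)$ for $\nu$ large gives geometric decay of the terms $a_jJ_{\nu_j}(\sqrt\l r)$ for $r<\rho'$. The finitely many $j$ with $J_{\nu_j}(\sqrt\l\rho)=0$ are handled by shrinking $\rho$ slightly.

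The normal derivative along $\th=0$ is $-r^{-1}\d_\th u(r,0)$, and along $\th=\a$ it is $r^{-1}\d_\th u(r,\a)$. Termwise this gives
\begin{equation*}
\d_\nu u(r,0)=-\sum_j a_j\frac{j\pi}{\a}\,\frac{J_{j\pi/\a}(\sqrt\l r)}{r}.
\end{equation*}
We use $J_\nu(x)=x^\nu g_\nu(x^2)$ with $g_\nu$ entire and $g_\nu(0)\neq0$. Then each term equals $r^{j\pi/\a-1}h_j(r^2)$ with $h_j$ smooth. The tail $\sum_{j\ge J}$ is $C^{m}$ on $[0,\rho')$ once $J\pi/\a-1>m$, by the same uniform convergence estimates. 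Regularity is therefore governed by the finitely many leading exponents $j\pi/\a-1$.

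The three cases of the statement then follow.
\begin{enumerate}
\item If $\pi/\a=n\in\N$, every exponent $jn-1$ is a nonnegative integer. Each term $r^{jn-1}h_j(r^2)$ is then smooth on $[0,\rho)$, and the tail estimates give $C^\infty$ for every order.
\item If $\pi/\a\notin\N$ and $\a<\pi$, write $s=\pi/\a-1>0$, so that $\lfloor s\rfloor=k(\a)$ and $s-\lfloor s\rfloor=\b(\a)$. The function $r^{s}h(r^2)$ lies in $C^{k,\b}([0,\rho))$ with exactly these exponents. The higher terms $j\ge2$ have exponents $js-(j-1)+\dots$, all larger than $s$ by at least $\pi/\a$, so they are at least as regular. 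Terms with integer exponents are smooth, and the remaining noninteger exponents exceed $s$. Hence the sum lies in $C^{k(\a),\b(\a)}$.
\item For the ground state, $u_1>0$ in $\O$, and $a_1=\frac{2}{\a}\int_0^\a u_1(r,\th)\sin(\pi\th/\a)\,d\th\,/J_{\pi/\a}(\sqrt\l r)>0$. The leading term is therefore $c\,r^{\pi/\a-1}(1+O(r^2))$ with $c\neq0$, plus terms of order $r^{2\pi/\a-1}$. The $(k+1)$-th derivative of $r^{s}$ is $\sim r^{\b-1}$, which rules out any $\tilde k>k$. The difference quotient of $r^s$ at $0$ with exponent $\tilde\b>\b$ blows up like $h^{\b-\tilde\b}$, which rules out any $\tilde\b>\b$. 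The smoother terms cannot cancel this. If $\a\in(\pi,2\pi)$, then $\pi/\a-1<0$, so $|\d_\nu u_1(r,\th)|\sim |c|\,r^{\pi/\a-1}\to\infty$.
\end{enumerate}

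The main obstacle is justifying termwise differentiation uniformly up to $r=0$, including at the boundary rays. This requires the coefficient decay argument via Bessel bounds, and the careful version is: fix $\rho'<\rho$ and bound $|a_j|\,\nu_j^m\,(\rho'/\rho)^{\nu_j}$ summably.

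A further subtlety in case 2 is that a term of higher $j$ may have an integer exponent. Such a term is harmless because it is smooth. A term whose exponent exceeds $s$ but has smaller fractional part is also harmless, because it is still $C^{k,\b}$: a larger exponent means more derivatives.
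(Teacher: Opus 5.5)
Your proposal is correct. It shares the paper's skeleton: the local Fourier--Bessel expansion at the corner, the termwise formula $\d_\nu u(r,0)=-\sum_j a_j\frac{j\pi}{\a}r^{-1}J_{j\pi/\a}(\sqrt{\l}r)$, and $a_1\neq 0$ from positivity of $u_1$. The differences lie in how the expansion is justified and how regularity is extracted.

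\textbf{How the paper proceeds.} It obtains the expansion by solving the Dirichlet problem on the sector and invoking uniqueness, so it must keep $\l$ off the sector's spectrum. It treats $\pi/\a\in\N$ by a complex extension and Morera's theorem. For $\pi/\a\notin\N$ it takes from Lehman's Lemma~4.2 that the leading asymptotic of Lemma~\ref{lemma:2.2} may be differentiated term by term, giving \eqref{eqn:kderiv}. It then checks the H\"older quotient only with base point $r=0$.

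\textbf{How you proceed.} You derive the expansion from the Bessel ODE satisfied by the sine coefficients: boundedness excludes $Y_\nu$, and zeros of $J_{\nu_j}(\sqrt\l\rho)$ affect only finitely many terms. You then write each term as $r^{j\pi/\a-1}h_j(r^2)$ and show the tail converges in $C^m([0,\rho'])$ by a weighted M-test.

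\textbf{What each route buys.} Yours is self-contained and treats all three parts uniformly, with no appeal to Lehman or Morera. It also gives the full H\"older seminorm, since $r^\b$ is globally $\b$-H\"older on $[0,\infty)$; the paper's check at base point $0$ leaves that uniform estimate implicit. The sharpness argument for $u_1$ is essentially identical in both. In exchange, the paper's Morera route yields real analyticity in $r$ in the integer case, which is more than $C^\infty$.

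Two small slips should be fixed.
\begin{enumerate}
\item The quantity you must show summable is $|a_j|\sup_{[0,\rho']}\bigl|\d_r^{\,i}\bigl(r^{-1}J_{\nu_j}(\sqrt\l r)\bigr)\bigr|\lesssim \nu_j^{\,i}\,\rho'^{-1-i}(\rho'/\rho)^{\nu_j}$ for $\nu_j>i+1$. It is not $|a_j|\nu_j^m(\rho'/\rho)^{\nu_j}$ as written, because $|a_j|$ itself grows like $\Gamma(\nu_j+1)(2/\sqrt\l\rho)^{\nu_j}$.
\item The $j$th exponent is $j\pi/\a-1=s+(j-1)\pi/\a$, not ``$js-(j-1)+\cdots$''. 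Your conclusion that it exceeds $s$ by at least $\pi/\a$ is nevertheless right.
\end{enumerate}
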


Four approaches have been proposed in the literature for studying the eigenvalue  problem~\eqref{eqn:evaluegen} on polygonal domains. We now discuss these in some detail.
\begin{itemize}
    \item \textbf{Algebraic methods}: One can derive the eigenstructure of domains that tile the plane using reflection groups. In 1833 Lam\'e \cite{Lame} was the first to give the eigenstructure of the equilateral triangle, using this method. While he neglected to address the completeness of the given eigenfunctions, Pinsky \cite{Pinsky} established this by combining a reflection argument with the known completeness of the eigenfunctions on the rectangle. The use of reflection principles is demonstrated by B\'erard and Helffer in \cite{BerHelf}. Using their method, one can construct the eigenfunctions for all the canonical polygons. While McCartin \cite{McCartin} used such algebraic arguments to prove that the canonical polygons are the only polygons whose eigenfunctions are trigonometric, our analytic approach below improves that result by proving that these are the only polygons whose respective ground eigenfunctions are smooth on the closure of their domain.
    
    \item \textbf{Separation of variables}: In 2006 McCartin \cite{McCartin} applied separation of variables to a domain that is not obviously separable: the equilateral triangle. Using a barycentric coordinate system, he explicitly found the eigenvalues and eigenfunctions, recovering Lam\'e's result with elementary methods. While the coordinate system can be adapted to arbitrary triangles, the eigenvalue equation splits into a pair of ODEs whose solutions are infinitely differentiable on all of $\R$. Therefore, this method cannot be generalised to find all eigenvalues of an arbitrary triangle since, as stated in Proposition~\ref{thm:cor}, the eigenfunctions of generic triangles are not infinitely differentiable at the corners.

    \item \textbf{Local asymptotic analysis}: Since the seminal work of Lehman \cite{Lehman} in 1959, this approach has mainly received attention in numerical computations, for example, when computing the ground eigenvalue of domains with fractal boundary \cite{BanjBoul} or studying scattering of acoustic waves with large wavenumber off polygons \cite{CWL}. The method exploits the similarity of the local geometry of a polygon with a sector, one of the few domains for which we have explicit expressions for the Dirichlet Laplacian eigenfunctions. This will be the main tool in our proof of Theorem~\ref{thm:thm}, drawing on both real and complex analysis techniques to argue that certain features of sector eigenfunctions are present in the eigenfunctions for polygonal domains at corners. While this technique has the strength of generality, applying to corners of arbitrary angle, it fails to provide any global information. 

    \item \textbf{Unified transform method}: This is the most recent approach to the explicit calculation of eigenvalues. The key feature of this method is the `global relation', which is a functional equation in a complex variable, coupling the known and unknown boundary values of the solution. The unknown boundary values can sometimes be eliminated using either symmetries of the global relation or an ansatz. In 2013 Fokas and Kalimeris \cite{FokKal} used this method to reproduce Lam\'e's solution for the eigenvalues of the equilateral triangle, by making an informed ansatz for the unknown boundary values. 
\end{itemize}
\subsection*{Novelty  and Open Questions}
This paper has its origins in an attempt to extend Fokas and Kalimeris' \cite{FokKal} approach to other polygonal domains. In doing so, we are led to compare the strengths and weaknesses of the four methods above. 

Below we provide an explicit expression for the coefficient of the leading order behaviour of eigenfunctions near a corner and argue when it is non-vanishing, which allows us to derive sharp regularity estimates. While the asymptotic behaviour established by Lehman is well known, the H\"older regularity of Laplacian eigenfunctions does not appear to have been rigorously addressed in previous works. Our main result specifies the regularity of the image of the Dirichlet-to-Neumann map for the problem~\eqref{eqn:evaluegen}.

To adapt the unified transform method to non-canonical polygons, a suitably chosen ansatz must be compatible with the results of Theorem~\ref{thm:thm}. For example, an ansatz for the normal derivative on a regular hexagon must satisfy $\d_\nu u_1 = O(\sqrt{r})$ near each corner.

The existence of an appropriate ansatz for the general form of the normal derivative on a given polygon is still an open problem. Other open problems include the inverse regularity question: seeking polygons whose eigenfunctions have a prescribed degree of regularity. Furthermore, the last statement of Theorem~\ref{thm:thm} relate to the ground eigenfunction but we suspect that there are higher eigenfunctions to which these conclusions apply. Lastly, the radius of convergence of our expansions might be bigger than we realise. In \cite{DescToll}, Descloux and Tolley claim that the series expansions for eigenfunctions near a corner is valid all the way up to the `nearest singularity of $u$', though the paper does not seem to include a proof of this claim.

\subsection*{Structure of the Paper}
The remainder of the paper is organised as follows. In Section~\ref{sect:local} we present the analysis to determine the degree of differentiability of the normal derivative at the corners, using a local expansion of the eigenfunction near the corners in terms of Bessel functions. This section furnishes the proofs of our main results. In Section~\ref{sect:UTM} we describe the steps of the unified transform method and address the scope and limitations of the method in solving the eigenvalue problem on polygons. Finally, in Section~\ref{sect:unfold}, we explain the role of reflection principles in spectral geometry and use them to show that the last part of Theorem~\ref{thm:thm} does not in general extend to higher eigenfunctions.


\section{Local Analysis}
\label{sect:local}
\begin{figure}[h!]
    \centering
    \begin{tikzpicture}
        \coordinate (A) at (0,0);
        \coordinate (B) at (4,0);
        \coordinate (C) at (1.5,2.5);
        \coordinate (x) at (1.35,1);
        \draw[thick] (A) -- (C) -- (B);
        \node at (1.5,2.7) {$\mathbf{v}$};
        \node at (x) {$\mathbf{x}$};
        \draw[dashed] (C) -- (x);
        \node at (1.55,1.75) {$r$};
        \pic[
            draw,
            angle radius=10mm,
            angle eccentricity=1.3,
            "$\theta$"
        ] {angle = A--C--x};
    \end{tikzpicture}
    \caption{Local polar coordinates centred at $\mathbf{v}.$}
    \label{fig:LPC}
\end{figure}

The local geometry of a corner is that of a sector, which is one of the few domains for which we have explicit expressions for the solutions to \eqref{eqn:evaluegen}. This allows us to deduce the local behaviour of eigenfunctions on $\O$. 

Given a corner $\mathbf{v}$ of angle $\a$, we define the local polar coordinates as follows. For a point $\mathbf{x}\in\O$, $r(\mathbf{x})$ is the distance between $\mathbf{v}$ and $\mathbf{x}$, and $\th(\mathbf{x})$ is the positive angle that $\mathbf{x}$ makes when measured anti-clockwise from one of the edges on which $\mathbf{v}$ lies. This is shown in Figure~\ref{fig:LPC}. Our first lemma justifies the notation we use throughout and gives the explicit form of the eigenfunction, summarising the argument given in the proof of \cite[Theorem 3.3,][]{CWL} for the second statement. Let $(\l, u)$ denote an eigenpair on $\O$, a domain with a corner of angle $\a$, and $D\subset\O$ be a sector centred at that corner with radius $0<\rho<\frac{\pi}{2\sqrt{\l}}$, as shown in Figure~\ref{fig:sector}.
\begin{lemma}
     Let $\tilde{u}:(0,\rho)\times(0,\a)\to\R$ denote the restriction of $u$ to $D$ in local polar coordinates. 
    \begin{enumerate}
        \item The function $\tilde{u}$ is given by the series
            \begin{equation}
                \tilde{u}(r,\theta) = \sum_{n = 1}^\infty a_n J_{n\pi/\a}(\sqrt{\l}r)\sin\left(\frac{n\pi\th}{\a}\right)
                \label{eqn:sectorefunc}
            \end{equation}
            with coefficients
            \begin{equation}
                a_n = \frac{2}{\a J_{n\pi/\a}(\sqrt{\l}\rho)}\int_0^\a \tilde{u}(\rho,\phi)\sin\left(\frac{n\pi\phi}{\a}\right)\,d\phi,\quad n\in\N.
                \label{eqn:coeffs}
            \end{equation}
        \item The series~\eqref{eqn:sectorefunc} converges uniformly for $r<\rho$.
    \end{enumerate}
    \label{thm:series}
\end{lemma}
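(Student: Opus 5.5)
The plan is separation of variables on the sector $D$, with the boundary data on the arc $r=\rho$ treated as known.

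\textbf{Step 1: angular expansion.} Fix $r\in(0,\rho]$. Since $u$ is smooth in $\O$ and continuous up to the straight edges, where it vanishes, the function $\phi\mapsto\tilde u(r,\phi)$ is continuous on $[0,\a]$ with zero endpoint values. Hence it has a Fourier sine series
$$\tilde u(r,\phi)=\sum_{n\ge1} c_n(r)\sin\left(\frac{n\pi\phi}{\a}\right),\qquad c_n(r)=\frac{2}{\a}\int_0^\a\tilde u(r,\phi)\sin\left(\frac{n\pi\phi}{\a}\right)d\phi .$$

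\textbf{Step 2: the ODE for $c_n$.} Differentiate $c_n$ twice under the integral, which is legitimate for $r\in(0,\rho)$ by interior smoothness and uniform bounds on compact sub-intervals of the angle. Write the equation in polar form, $\tilde u_{rr}+r^{-1}\tilde u_r+r^{-2}\tilde u_{\phi\phi}+\l\tilde u=0$, and integrate the $\tilde u_{\phi\phi}$ term by parts twice. The boundary terms vanish because $\tilde u=0$ at $\phi=0,\a$. One technical point: we need $\tilde u_\phi$ to be integrable up to the edges. I would handle this by integrating over $[\eps,\a-\eps]$ and letting $\eps\to0$, using smoothness up to the open edge segments as quoted from \cite{TSG}. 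This gives Bessel's equation of order $\nu_n=n\pi/\a$ for $c_n(\sqrt\l r)$, so
$$c_n(r)=a_nJ_{\nu_n}(\sqrt\l r)+b_nY_{\nu_n}(\sqrt\l r).$$

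\textbf{Step 3: discarding $Y$ and fixing $a_n$.} Since $u$ is bounded near the corner, $|c_n(r)|\le \frac{2}{\a}\cdot\a\sup|u|$ is bounded as $r\to0$. But $Y_{\nu}$ blows up like $r^{-\nu}$ with $\nu>0$, so $b_n=0$. Evaluating at $r=\rho$ gives formula \eqref{eqn:coeffs}. Here we need $J_{\nu_n}(\sqrt\l\rho)\neq0$. This holds because $\sqrt\l\rho<\pi/2$ and the first positive zero $j_{\nu,1}>\nu+\ldots\ge j_{0,1}\approx2.405>\pi/2$, since zeros increase in $\nu$. Each term then satisfies the equation, and for fixed $r$ the series converges (at least in $L^2$ in $\phi$) to $\tilde u$, which proves part 1.

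\textbf{Step 4: uniform convergence (the main obstacle).} The coefficients satisfy $|a_nJ_{\nu_n}(\sqrt\l\rho)|\le 2\sup|u|$. So the $n$-th term is bounded by
$$2\sup|u|\,\frac{J_{\nu_n}(\sqrt\l r)}{J_{\nu_n}(\sqrt\l\rho)}.$$
I would use the monotonicity of $x\mapsto x^{-\nu}J_\nu(x)$ on $(0,j_{\nu,1})$, or the product formula $J_\nu(x)=\frac{(x/2)^\nu}{\Gamma(\nu+1)}\prod_k(1-x^2/j_{\nu,k}^2)$, to bound the ratio by $C(r/\rho)^{\nu_n}$. The constant $C$ is uniform in $n$ because, for $x<\pi/2<j_{\nu,1}$, the product factors are controlled uniformly; the simpler estimate $\frac{(x/2)^\nu}{\Gamma(\nu+1)}e^{-x^2/(4(\nu+1))}\le J_\nu(x)\le\frac{(x/2)^\nu}{\Gamma(\nu+1)}$ suffices. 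For $r\le r_0<\rho$ the terms are then dominated by the summable geometric sequence $C(r_0/\rho)^{n\pi/\a}$, and the Weierstrass M-test gives uniform convergence on each $r\le r_0<\rho$. This is the sense of "$r<\rho$", and equivalently one can shrink $\rho$ slightly. By uniqueness of the Fourier coefficients, the limit agrees with $\tilde u$.
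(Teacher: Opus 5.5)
Your Part 1 is correct and argued differently from the paper. The paper solves the auxiliary problem on $D$ with arc data $G=\tilde u(\rho,\cdot)$ by separation of variables. It then identifies that solution with $\tilde u$ by uniqueness, arguing that $\lambda$ is not a Dirichlet eigenvalue of $D$ and shrinking $\rho$ if it were. You instead expand $\tilde u(r,\cdot)$ in the angular sine basis for each fixed $r$, derive Bessel's equation for $c_n(r)$, and remove the $Y_{\nu_n}$ component using boundedness of $u$ at the corner. Your route never has to check that the separated series attains merely continuous data on the arc, and needs no uniqueness step. The price is justifying differentiation under the integral and integration by parts up to $\phi=0,\alpha$, which your appeal to smoothness up to the open edge segments covers. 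Your observation $\sqrt\lambda\rho<\pi/2<j_{0,1}\le j_{\nu,1}$ also shows that the paper's ``shrink $\rho$'' contingency never arises, because the Dirichlet eigenvalues of $D$ are $j_{n\pi/\alpha,k}^2/\rho^2>\lambda$.

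Step 4, however, has a genuine error at exactly the point that matters. The M-test needs $J_{\nu}(\sqrt\lambda r)/J_{\nu}(\sqrt\lambda\rho)\le C(r/\rho)^{\nu}$ with $C$ independent of $\nu=n\pi/\alpha$. Equivalently, you need a lower bound for $2^\nu\Gamma(\nu+1)x^{-\nu}J_\nu(x)$ at $x=\sqrt\lambda\rho$ that is uniform in $\nu$.

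The ``simpler estimate'' you give for this is false; the true inequality goes the other way. The product formula, $\log(1-t)\le -t$, and the Rayleigh sum $\sum_k j_{\nu,k}^{-2}=1/(4(\nu+1))$ give $J_\nu(x)\le \frac{(x/2)^\nu}{\Gamma(\nu+1)}e^{-x^2/(4(\nu+1))}$ for $0<x<j_{\nu,1}$. You can see this from the $x^4$ coefficients, $\frac{1}{32(\nu+1)(\nu+2)}<\frac{1}{32(\nu+1)^2}$, or numerically from $J_0(1)\approx 0.765<e^{-1/4}\approx 0.779$.

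Monotonicity of $x^{-\nu}J_\nu(x)$ does not rescue this. The function is decreasing, so comparing the two radii only bounds the ratio \emph{below} by $(r/\rho)^\nu$. Comparing with $x=0$ only gives the easy half, $J_\nu(\sqrt\lambda r)\le(\sqrt\lambda r/2)^\nu/\Gamma(\nu+1)$. What is left is the unproved remark that ``the product factors are controlled uniformly''.

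The repair is short; any of the following works:
\begin{enumerate}
\item The paper's route: the Poisson integral representation gives $C_\nu\cos(x)\,x^\nu\le J_\nu(x)\le C_\nu x^\nu$ on $[0,\pi/2]$ with the same $C_\nu$, so the ratio is at most $(r/\rho)^{\nu}/\cos(\sqrt\lambda\rho)$.
\item The Frobenius series is alternating with decreasing terms for $x\le\pi/2$, $\nu\ge 0$, which gives $2^\nu\Gamma(\nu+1)x^{-\nu}J_\nu(x)\ge 1-x^2/4\ge 1-\pi^2/16>0$.
\item Monotonicity of each zero $j_{\nu,k}$ in $\nu$ gives $2^\nu\Gamma(\nu+1)x^{-\nu}J_\nu(x)\ge J_0(x)>0$ for $x<j_{0,1}$.
\end{enumerate}
With any of these in place of the false bound, your Step 4, and hence the whole proof, goes through.
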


\begin{proof} 

    \begin{enumerate}
        \item Consider the inhomogeneous problem on $D$
    \begin{equation}
        \begin{cases}
            (\Delta +{\l})\varphi (r,\th) = 0 & (r,\th)\in(0,\rho)\times(0,\a)\\
            \varphi(r,0) = \varphi(r,\a) = 0 & r\in (0,\rho)\\
            \varphi(\rho,\th) = G(\th)
            \label{eqn:poisson}
        \end{cases}
    \end{equation}
    where ${\l}$ and $G(\th) = \tilde{u}(\rho,\th)$ are given. Firstly, $\tilde{u}$ is automatically a solution to this problem. Equation~\eqref{eqn:poisson} can also be solved using separation of variables to give
    \begin{equation}
        \psi(r,\th) = \sum_{n = 1}^\infty a_n J_{n\pi/\a}(\sqrt{{\l}}r)\sin\left(\frac{n\pi\th}{\a}\right),
        \label{eqn:series}
    \end{equation}
    as in \cite{BanjBoul}, where $J_m$ is an $m$th order Bessel function of the first kind and
    $$a_n = \frac{2}{\a J_{n\pi/\a}(\sqrt{\l} \rho)}\int_0^\a G(\phi)\sin(n\pi\phi/\a)\,d\phi.$$
    Finally we claim that a solution to this problem is unique. Consider the problem solved by $\tilde{u}-\psi$ where $\psi$ is the series \eqref{eqn:series}:
    \begin{equation*}
        \begin{cases}
            (\Delta + {\l})(\tilde{u}-\psi) = 0&\text{in }D\\
            \tilde{u}-\psi = 0 &\text{on }\d D.
        \end{cases}
    \end{equation*}
    If the operator $\Delta + \l$ is injective then $\tilde{u}_1-\psi$ is identically 0. This occurs precisely when $\l$ is not a Dirichlet Laplacian eigenvalue on $D$. The eigenvalues on $D$ form a discrete subset of $[0,\infty)$ with the only accumulation point at $\infty$. When we scale $\rho$ by $1-\eps$ for $\eps>0$, all eigenvalues on $D$ are scaled by $(1-\eps)^{-2}$. If $\l$ does happen to be an eigenvalue on $D$ then we can choose a smaller, but arbitrarily close, value of $\rho$ and repeat the argument to prove the first statement.
    \item We first bound the coefficients~\eqref{eqn:coeffs} from above:
    \begin{equation*}
        |a_n| = \left| \frac{2}{\a J_{n\pi/\a}(\sqrt{\l} \rho)}\int_0^\a \tilde{u}(\rho,\phi)\sin(n\pi\phi/\a)\,d\phi\right|\leq \frac{2\|\tilde{u} \|_{L^\infty(D)}}{J_{n\pi/\a}(\sqrt{{\l }}\rho)}.
    \end{equation*}
    Consider the integral representation for Bessel functions of order $\b>-1/2$ \cite[Equation 17.3.1, ][]{WhitWats}
    \begin{equation}
        J_\b(z) = \frac{2(z/2)^\b}{\sqrt{\pi}\Gamma(\b+1/2)}\int_0^1 (1-t^2)^{\b-1/2}\cos(zt)\,dt,\, \text{Re}z>0
        \label{eqn:besselintegral}
    \end{equation}
    For arbitrary $\b$ we choose the principal branch of $(z/2)^\b$, which is analytic in $\text{Re} z>0$. However, for $\b\in\N$ the representation is valid for any $z\in\C$. We use $|\cos(zt)| \le e^{|\Im z|}$ for $z\in\C$ and $t\in(0,1)$ to provide an upper bound
    \begin{equation}
        |J_\b(z)| \le \underbrace{\frac{\int_0^1 (1-t^2)^{\b-1/2}\,dt}{\sqrt\pi 2^{\b-1}\Gamma(\b+1/2)}}_{=: C_\b}z^{\b} e^{|\Im z|}.
        \label{eqn:besselupper}
    \end{equation}
    For $x\in[0,\pi/2]$ we can use $\cos(xt) \ge \cos(x)$ for $t\in(0,1)$ to provide the lower bound
    \begin{equation}
        C_\beta\cos(x)x^\b \le J_\b(x).
        \label{eqn:bessellower}
    \end{equation}
    According to \eqref{eqn:besselupper} and \eqref{eqn:bessellower}, each term of the series \eqref{eqn:sectorefunc} is bounded as follows, once we restrict $\sqrt{\l}\rho<\pi/2$:
    \begin{align}
        \left|a_n J_{n\pi/\a}(\sqrt{\l } r)\right | &\leq \frac{2\|\tilde{u} \|_{L^\infty(D)}}{J_{n\pi/\a}(\sqrt{\l }\rho)} J_{n\pi/\a}(\sqrt{\l } r)\nn\\
        &\le \frac{2\|\tilde{u} \|_{L^\infty(D)}C_{n\pi/\a}}{(\sqrt{{\l }}\rho)^{n\pi/\a}\cos(\sqrt{\l }\rho)} \frac{(\sqrt{{\l }}r)^{n\pi/\a}}{C_{n\pi/\a}}\nn\\
        &=  \frac{2\|\tilde{u} \|_{L^\infty(D)}}{\cos(\sqrt{{\l }}\rho)}\left(\frac r\rho\right)^{n\pi/\a}.
        \label{eqn:summandbound}
    \end{align}
    With
    \begin{equation*}
        M_n = \frac{2\|\tilde{u} \|_{L^\infty(D)}}{\cos(\sqrt{{\l }}\rho)}\left(\frac {R^{\pi/\a}}{\rho^{\pi/\a}}\right)^n,
    \end{equation*}
    the Weierstrass M-test implies that the series \eqref{eqn:sectorefunc} converges uniformly for $(r,\th) \in [0,R]\times[0,\a]$ for any $R\leq\frac{\pi}{2\sqrt{{\l }}}$
    as
    \begin{equation*}
        \sum_{n=1}^\infty M_n < \infty.
    \end{equation*}
    \end{enumerate}
\end{proof}

Having established the uniform convergence of the expansion~\eqref{eqn:sectorefunc} in Lemma \ref{thm:series}, we now describe the asymptotic behaviour of $\d_\nu u $. Lehman \cite[Theorem 3.4, ][]{Lehman} showed that, near a corner, the solution to any second order, homogeneous, elliptic PDE with homogeneous Dirichlet boundary condition has the asymptotic behaviour
\begin{equation*}
    \tilde{u}(r,\th) = \gamma r^{\pi/\a}\sin(\pi\th/\a) + o(r^{\pi/\a}) \text{ as } r\to0^+.
\end{equation*}
The following lemma gives an explicit expression for $\gamma$:
\begin{equation*}
    \gamma =  \frac{a_1 }{\Gamma(\pi/\a+1)}\left(\frac{\sqrt{\l }}{2}\right)^{\pi/\a}.
\end{equation*}
\begin{lemma}\label{lemma:2.2}
    Assume the same hypotheses are in Lemma~\ref{thm:series}. Then we have
    \begin{equation}
        \d_\nu \tilde{u} (r,\th) =-\frac{\pi}{\a}\frac{a_1}{\Gamma(\pi/\a+1)}\left(\frac{\sqrt{\l }}{2}\right)^{\pi/\a}r^{\pi/\a-1} + o(r^{\pi/\a-1}),
        \label{eqn:asymptotic}
    \end{equation}
    as $r\to 0^+$.
\end{lemma}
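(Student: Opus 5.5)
We work from the series \eqref{eqn:sectorefunc} of Lemma~\ref{thm:series} and compute the normal derivative term by term. In local polar coordinates the outward normal on the edge $\th=0$ is $-\frac{1}{r}\d_\th$, and on $\th=\a$ it is $+\frac1r\d_\th$. Differentiating formally gives
\begin{equation*}
\d_\nu\tilde u(r,0) = -\frac{1}{r}\sum_{n=1}^\infty a_n\frac{n\pi}{\a}J_{n\pi/\a}(\sqrt{\l}r),
\end{equation*}
and the same expression with an extra factor $(-1)^{n+1}$ on the edge $\th=\a$. Only the $n=1$ term has the same sign on both edges, which is why the leading term in \eqref{eqn:asymptotic} is edge-independent.

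The plan has three steps.
\begin{enumerate}
\item We justify the term-by-term differentiation in $\th$ up to the boundary. Since $\tilde u$ is smooth on the open sector and the coefficients satisfy $|a_nJ_{n\pi/\a}(\sqrt\l r)|\le C(r/\rho)^{n\pi/\a}$ by \eqref{eqn:summandbound}, the differentiated series has terms bounded by $C\frac{n\pi}{\a}(r/\rho)^{n\pi/\a}$. This is still summable uniformly on $[0,R]\times[0,\a]$ for any $R<\rho$, because the polynomial factor $n$ is absorbed by the geometric decay. Uniform convergence of the derivative series then identifies it with $\d_\th\tilde u$, including its continuous extension to $\th=0,\a$.
\item We isolate the $n=1$ term using the power series $J_\b(z)=\frac{(z/2)^\b}{\Gamma(\b+1)}(1+O(z^2))$. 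This gives the main term $-\frac{\pi}{\a}\frac{a_1}{\Gamma(\pi/\a+1)}(\sqrt\l/2)^{\pi/\a}r^{\pi/\a-1}$, with an error of order $r^{\pi/\a+1}=o(r^{\pi/\a-1})$.
\item We bound the tail $n\ge2$ by $\frac{C}{r}\sum_{n\ge2}n(r/\rho)^{n\pi/\a}$. For $r\le\rho/2$, say, this is $O(r^{2\pi/\a-1})$, since the sum is dominated by a constant times its first term. That is $o(r^{\pi/\a-1})$, completing the proof.
\end{enumerate}

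The main obstacle is step 1: making the interchange of differentiation and summation rigorous at the boundary edges, where the representation is only known to converge uniformly. Two points need care. First, the bound \eqref{eqn:summandbound} must hold for a slightly smaller radius. We work on $r\le R<\rho$, so that the ratio $(R/\rho)^{\pi/\a}<1$ gives genuine geometric decay. Second, the normal derivative of $u$ on the edge must coincide with the limit of $\frac1r\d_\th\tilde u$. This follows from the smoothness of $u$ up to the straight edge away from the corner (quoted in the introduction) together with the uniform convergence of the $\th$-derivative series.
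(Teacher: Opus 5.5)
Your proof is correct and follows the paper's argument: termwise differentiation of the Bessel series, then the leading Frobenius term for $n=1$, then control of the $n\ge 2$ tail via \eqref{eqn:summandbound} and $\sum n t^n$. Your Step 1 also supplies the justification for differentiating termwise (the factor $n$ is absorbed by the geometric decay of $(r/\rho)^{n\pi/\alpha}$), which the paper only attributes to Lemma~\ref{thm:series}; one side remark is slightly off, since every odd-$n$ term, not only $n=1$, has the same sign on both edges (as $(-1)^{n+1}=1$ for odd $n$), but this does not affect the argument.
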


Moreover, \cite[Lemma 4.2, ][]{Lehman} implies that the relations obtained for partial derivatives of arbitrary order by differentiating formally are valid when $\pi/\a\notin\N$. This implies that
\begin{equation}
    \frac{d^k}{dr^k} \d_\nu \tilde{u}(r,\th) = -\gamma \left(\frac{\pi}{\a}\right)_{(k+1)}r ^{\pi/\a-1-k} + o(r^{\pi/\a-1-k}),
    \label{eqn:kderiv}
\end{equation}    
where $$(t)_{(k)} = t(t-1)\cdots(t-k+1)$$ 
is the falling factorial symbol.
\begin{proof}
    At the point $(r,\th)$ we use $\{\bf{e_r},\bf{e_\th}\}$ as a local orthonormal basis for $\R^2$, where the former is the unit vector in the radial direction of $(r,\th)$ and the latter is the orthogonal tangential vector in the anti-clockwise direction. We find the normal derivative by differentiating \eqref{eqn:sectorefunc} term-wise, thanks to Lemma~\ref{thm:series}, and by using that, in this basis, the outward unit normal vector to the edge $\th=0$ is $-\mathbf{e_\theta}$:
    \begin{align}
        \d_\nu \tilde{u} (r,0) &= -\mathbf{e_\th} \cdot\nabla \tilde{u} (r,\th)\big|_{\th=0}\nn\\
        &= -\mathbf{e_\th}\cdot\left(\frac{\d \tilde{u} }{\d r} \mathbf{e_r} + \frac{1}{r}\frac{\d \tilde{u} }{\d\th}\mathbf{e_\th}\right)\Bigg|_{\th=0}\nn\\
        &= -\frac{1}{r} \frac{\d \tilde{u} (r,\th)}{\d\theta}\Bigg|_{\th=0}\nn\\
        &= -\sum_{n = 1}^\infty \frac{a_nn\pi}{\a}\frac{J_{n\pi/\a}(\sqrt{\l }r)}{r}\cos\left(\frac{n\pi\theta}{\a}\right)\Bigg|_{\th=0}\nn\\
        &= -\sum_{n = 1}^\infty \frac{a_nn\pi}{\a}\frac{J_{n\pi/\a}(\sqrt{\l }r)}{r}.
        \label{eqn:normal1}
    \end{align}
    The outward unit normal vector to the edge $\th=\a$ is $\mathbf{e_\theta}$ so
    \begin{equation} 
        \d_\nu \tilde{u} (r,\a) = \mathbf{e_\th} \cdot\nabla \tilde{u} (r,\th)\big|_{\th=\a} = \sum_{n = 1}^\infty(-1)^n \frac{a_nn\pi}{\a}\frac{J_{n\pi/\a}(\sqrt{\l }r)}{r}.
        \label{eqn:normal2}
    \end{equation}
    We use the series~\eqref{eqn:normal1} to characterise the limit equation~\eqref{eqn:asymptotic} by splitting into a sum of limits, the existence of which is justified \textit{a posteriori}:
    \begin{align}
        &\left(\sum_{n=1}^\infty \frac{n\pi a_n}{\a} \frac{J_{n\pi/\a}(\sqrt{\l }r)}{r} - \frac{a_1 \pi}{\a\Gamma(\pi/\a+1)}\left(\frac{\sqrt{\l }}{2}\right)^{\pi/\a}r^{\pi/\a-1}\right)\bigg/r^{\pi/\a-1} \to 0\nn\\
        \iff &\sum_{n=1}^\infty n a_n \frac{J_{n\pi/\a}(\sqrt{\l }r)}{r^{\pi/\a}} -  \frac{a_1}{\Gamma(\pi/\a+1)}\left(\frac{\sqrt{\l }}{2}\right)^{\pi/\a}\to 0\nn\\
        \iff& \sum_{n=2}^\infty n a_n \frac{J_{n\pi/\a}(\sqrt{\l }r)}{r^{\pi/\a}} + a_1\frac{J_{\pi/\a}(\sqrt{\l }r)}{r^{\pi/\a}} -  \frac{a_1}{\Gamma(\pi/\a+1)}\left(\frac{\sqrt{\l }}{2}\right)^{\pi/\a}\to 0\nn\\
        \Longleftarrow&\underbrace{\sum_{n=2}^\infty n a_n \frac{J_{n\pi/\a}(\sqrt{\l }r)}{r^{\pi/\a}} \to 0}_{\textbf{(I)}} \text{ and }\underbrace{\frac{J_{\pi/\a}(\sqrt{\l }r)}{r^{\pi/\a}} - \frac{1}{\Gamma(\pi/\a+1)}\left(\frac{\sqrt{\l }}{2}\right)^{\pi/\a}\to 0}_{\textbf{(II)}}.\nn
    \end{align}
    Claim \textbf{(II)} asserts that the leading order behaviour of the Bessel function comes from the lowest power in its Frobenius expansion and claim $\textbf{(I)}$ precludes any contributions from the tail sum at leading order. We begin with claim \textbf{(I)}, using inequality~\eqref{eqn:summandbound} to bound the summands
    \begin{align*}
        \left|\sum_{n=2}^\infty na_n \frac{J_{n\pi/\a}(\sqrt{\l }r)}{r^{\pi/\a}}\right| 
        \le& \frac{1}{r^{\pi/\a}}\sum_{n=2}^\infty n\left|a_nJ_{n\pi/\a}(\sqrt{\l }r)\right| \nn\\
        \le&\frac{2\|u\|_\infty}{r^{\pi/\a}\cos(\sqrt{\l }\rho)}\sum_{n=2}^\infty n\left(\frac{r^{\pi/\a}}{\rho^{\pi/\a}}\right)^{n}\\
        =& \frac{2\|u\|_\infty}{r^{\pi/\a}\cos(\sqrt{\l }\rho)}\left[\frac{(r/\rho)^{\pi/\a}}{(1-(r/\rho)^{\pi/\a})^2} - (r/\rho)^{\pi/\a}\right]\nn,
    \end{align*}
    which tends to 0 as $r\to 0$. In the final step we used that, for $|t|<1$,
    $$\sum_{n=1}^\infty nt^n = t\sum_{n=1}^\infty \frac{d}{dt}t^n= t\frac{d}{dt}\frac{1}{1-t} = \frac{t}{(1-t)^2}.$$
    
   Claim \textbf{(II)} follows directly from the Frobenius series for Bessel functions:
    \begin{equation*}
        J_{\pi/\a} (r) = \sum_{m=0}^\infty \frac{(-1)^m}{m!\,\Gamma(m+\pi/\a+1)}\Big(\frac{r}{2}\Big)^{2m+\pi/\a}.
    \end{equation*}
    Indeed,
    \begin{align}
        &\left|\frac{J_{\pi/\a}(\sqrt{\l }r)}{r^{\pi/\a}} -  \frac{1}{\Gamma({\pi/\a}+1)}\left(\frac{\sqrt{\l }}{2}\right)^{\pi/\a}\right|\nn\\
        =& \left|\frac{1}{r^{\pi/\a}}\left(\sum_{m=0}^\infty \frac{(-1)^m}{m!\Gamma(m+{\pi/\a}+1)}\left(\frac{\sqrt{\l }r}{2}\right)^{2m+{\pi/\a}} \right) -  \frac{1}{\Gamma({\pi/\a}+1)}\left(\frac{\sqrt{\l }}{2}\right)^{\pi/\a}\right|\nn\\
        =& \left(\frac{\sqrt{\l }}{2}\right)^{\pi/\a}\left|\sum_{m=1}^\infty \frac{(-1)^m}{m!\Gamma(m+{\pi/\a}+1)}\left(\frac{\sqrt{\l }r}{2}\right)^{2m}\right|\nn\\
        \le& \left(\frac{\sqrt{\l }}{2}\right)^{\pi/\a}\sum_{m=1}^\infty \frac{1}{m!}\left(\frac{\l r^2}{4}\right)^{m} =  \left(\frac{\sqrt{\l }}{2}\right)^{\pi/\a}\left(\exp\left(\l r^2/4\right)-1\right).\nn
    \end{align}
    Taking the limit as $r\to0$ proves the claim. An identical argument proves the claim for the edge $\th=\a$.
\end{proof}
To find sharp upper bounds on the regularity of $\d_\nu u$, we require the first coefficient $a_1$ to be non-zero. This is clear for the first eigenfunction. Indeed,
\begin{equation*}
    a_1 =  \frac{2}{\a J_{\pi/\a}(\sqrt{\l_1}\rho)}\int_0^\a \tilde{u}_1(\rho,\phi)\sin\left(\frac{\pi\phi}{\a}\right)\,d\phi.
\end{equation*}
By Courant's nodal domain theorem, $u_1$ has a definite sign on all of $\O$ and so too does the integrand on right-hand side, meaning $a_1\neq 0$. We are now ready to complete the proof of Theorem~\ref{thm:thm}.
\begin{proof}[Proof of Theorem \ref{thm:thm}]
    We first prove the positive statements. We let $\th=0$ and the $\th=\a$  case is analogous.
    \begin{enumerate}
        \item If $\pi/\a\in\N$ then every Bessel function in \eqref{eqn:normal1} has integer order and is thus infinitely differentiable. However, uniform convergence does not necessarily confer differentiability on the limit function. For this to be true we need complex analyticity, hence we consider a complex extension of the series~\eqref{eqn:normal1}. Let $U = \{ z\in\C : |z|<\pi/2\}$ and
        \begin{equation*}
            F(z) = \frac{1}{ z}\sum_{n=1}^\infty na_n J_{n\pi/\a}(z), \quad z\in U,
        \end{equation*}
        so that $\d_\nu \tilde{u}(r,0) = -\frac{\pi}{\a}F(\sqrt{\l}r)$ for $r\in[0,\rho)$. The sequence of functions given by the partial sums,
        $$F_N(z) = \frac1z\sum_{n=1}^N a_n n J_{n\pi/\a}(z),$$
        converge uniformly to $F$ on compact subsets of $U$ by applying inequality~\eqref{eqn:summandbound} and invoking the Weierstrass M-test. As the integral representation~\eqref{eqn:besselintegral} is valid for all $z\in\C$ when $\pi/\a\in\N$, convergence of $F_N$ is uniform on the disc $U$. As Bessel functions of integer order are entire,
        $$\int_\gamma F_N(z)\,dz = 0,$$
        where $\gamma$ is any contour inside $U$. Uniformity of the convergence of $F_N$ to $F$ allows interchanging limits, so
        $$0 = \lim_{N\to \infty} \int_\gamma F_N(z)\,dz = \int_\gamma \lim_{N\to\infty} F_N(z)\,dz = \int_\gamma F(z)\,dz. $$
        As this is true for all contours $\gamma$ inside $U$, Morera's theorem implies that $F$ is analytic on $U$. Hence, $\d_\nu u$ is infinitely differentiable at $r = 0$.
                
        \item Suppose $\a\in(0,\pi)$ and $\pi/\a \notin \N$. 
        As all derivatives of $u$ can be extended to the $C^\infty$ portions of the boundary, the only point that remains to check is $r=0$. By the identity~\eqref{eqn:kderiv}, we see that the first $k$ derivatives where $\pi/\a-1-k >0$ are continuous and take the value $0$ at the point $r=0$. As for the H\"older continuity, we need only consider the limit as $r\to 0$. Let $\b\in(0,1)$ and consider
        \begin{align}
            & \frac{1}{r^\b}\left( \frac{d^{k(\a)}}{dr^{k(\a)}} \d_\nu \tilde{u}(r,\th)  - \frac{d^{k(\a)}}{dr^{k(\a)}} \d_\nu \tilde{u}(0,\th)\right)\nn\\
            =& \frac1{r^{\b}} \left({ -\gamma \left(\frac{\pi}{\a}\right)_{(k(\a)+1)} r^{\pi/\a-1-k(\a)} + o\left(r^{\pi/\a-1-k(\a)}\right)}\right)\nn\\
            =& \left({ -\gamma\left(\frac{\pi}{\a}\right)_{(k(\a)+1)} r^{\pi/\a-1-k(\a)-\b} + o\left(r^{\pi/\a-1-k(\a)-\b}\right)}\right)\nn
        \end{align}
        as $r\to 0$. If $\b$ is such that $\pi/\a-1-k(\a)-\b \ge 0$ then the limit
        $$\lim_{r\to 0}\frac{ \frac{d^{k(\a)}}{dr^{k(\a)}} \d_\nu \tilde{u}(r,\th)  - \frac{d^{k(\a)}}{dr^{k(\a)}} \d_\nu \tilde{u}(0,\th)}{r^\b}$$
        is finite. Thus $\d_\nu u \in C^{k(\a),\b(\a)}$ where $\b(\a) = \pi/\a - 1 - k(\a) = \pi/\a - \lfloor \pi/\a\rfloor$.
    \end{enumerate}
    To prove the negative statements we restrict our attention to the ground eigenfunction.
    \begin{enumerate}[resume]  
        \item Letting $\tilde{k}>k(\a)$ implies that the exponent $\pi/\a -1-\tilde{k}$ in \eqref{eqn:kderiv} is negative, meaning the $\tilde{k}$th derivative of $\d_\nu u_1$ grows without bound as $r\to0$.

        Now let $\tilde{\b}>\b(\a)$. Once again, the limit under consideration has behaviour
        \begin{equation*}
             { -\gamma\left(\frac{\pi}{\a}\right)_{(k(\a)+1)} r^{\pi/\a-1-k(\a)-\tilde{\b}} + o\left(r^{\pi/\a-1-k(\a)-\tilde{\b}}\right)}
        \end{equation*}
        as $r\to 0$. As the exponent is negative again, the limit does not exist, meaning $\d_\nu \tilde{u}_1(r,\th) \notin C^{k(\a),\tilde{\b}}([0,\rho))$.
        
        For the second part, consider a re-entrant corner of angle $\a\in(\pi,2\pi)$. Because $\d_\nu\tilde{u}_1^j = -\gamma\frac{\pi}{\a}r^{\pi/\a-1} + o(r^{\pi/\a-1})$, the exponent is always negative for this range of angles, meaning
        \begin{equation*}
            \lim_{r\to 0^+}|\d_\nu u_1^j(r)| = \infty.
        \end{equation*}
    \end{enumerate}
    This concludes the proof.
\end{proof}

All statements hold for any domain with a corner as our analysis has hitherto been local. By focusing on polygons, we can bridge between the local behaviour of the eigenfunctions and the regularity on the whole domain. 
\begin{proof}[Proof of Proposition~\ref{thm:cor}]
    The eigenfunctions of the canonical polygons are listed in \cite{KuttSig} and they are all readily extended to smooth functions on all of $\R^2$. 

    It remains to show that the ground eigenfunction of a non-canonical polygon is not smooth on $\overline{\O}$. When searching for polygons with smooth eigenfunctions, we focus on convex polygons whose angles are all of the form $\pi/k$ with $k\in\N$; any non-convex polygon must have a singularity in its normal derivative at each re-entrant corner, by virtue of Theorem~\ref{thm:thm}. 
    
    As the sum of angles in a convex $n$-gon is $(n-2)\pi$, we seek $n\in\N$ and $\{k_i\}_{i=1}^n\subset\N$ such that
    \begin{equation}
        \sum_{i=1}^n \frac{\pi}{k_i} = (n-2)\pi.
        \label{eqn:dioph}
    \end{equation}
    Convexity of $\O$ implies that each $k_i \ge2$, meaning
    \begin{equation*}
        (n-2) = \sum_{i=1}^n \frac{1}{k_i}\le \frac{n}{2}\implies n\le 4.
    \end{equation*}
    First consider the case $n=4$. If, say, $k_1>2$ then we must have $k_i<2$ for some index $i\neq 1$, contradicting convexity. Thus equality in \eqref{eqn:dioph} is attained only by $k_i =2$ for $i=1,2,3,4$, corresponding to a rectangle.

    In the case $n=3$ we seek solutions to
    \begin{equation*}
        \frac{1}{k_1} + \frac{1}{k_2} + \frac{1}{k_3} = 1,\quad k_1,k_2,k_3\in\N.
    \end{equation*}
    At least one of the $k_i$ must be less than 4, otherwise the sum will not exceed $3/4$. Therefore we need only consider the cases when at least one of the integers, say $k_1$, is $2$ or $3$. If $k_1=2$,
    $$\frac{1}{k_2} + \frac{1}{k_3} = \frac{1}{2}\implies 2k_2 + 2l_3 = k_2k_3 \implies (k_2-2)(k_3-2) = 4.$$
    The only integer factorisations with $k_2,k_3>0$ are $4\cdot 1$ and $2\cdot 2$ leading to the solutions $(2,3,6)$ and $(2,4,4)$. If $k_1=3$ then
    $$\frac{1}{k_2} + \frac{1}{k_3} = \frac{2}{3} \implies 3k_2 + 3k_3 = 2k_2k_3 \implies (2k_2-3)(2k_3-3) = 9.$$
    The integer factorisations with $k_2,k_3>0$ are $3\cdot 3$ and $9\cdot 1$, corresponding to the solutions $(2,3,6)$ and $(3,3,3).$ Therefore the only solutions correspond to the canonical polygons.

    This exhaustive search means that a non-canonical polygon has at least one angle not of the form $\pi/n$ for $n\in\N$. Theorem~\ref{thm:thm} implies that neither $\d_\nu u_1$ nor $u_1$ is infinitely differentiable at that corner.
\end{proof}


\section{The Unified Transform Method}
\label{sect:UTM}
The unified transform method (UTM), introduced by Fokas in 1997, is a method to derive integral representations of solutions of linear and integrable non-linear PDEs in two independent variables \cite{Fok97}. The method can be summarised in the following steps:
\begin{enumerate}
    \item Write the PDE in divergence form. 
    \item Integrate the divergence form of the PDE over the whole domain and use Green's theorem to restrict this integral to the boundary of the domain. This equation is called the `global relation'.
    \item The global relation couples the known and unknown boundary values. The unknown boundary values are eliminated using either symmetries of the global relation or an ansatz.
\end{enumerate}

For linear initial-boundary value problems of the form
$$q_t + w(-i\d_x) q = 0,$$
where $w$ is a polynomial of arbitrary degree, this method provides an algorithmic way to derive an effective solution, as unknown boundary values can be eliminated by exploiting the discrete symmetries of the polynomial $w$ \cite{FokSpen}. This provided a new unified technique for solving boundary value problems for PDEs of this form of arbitrary order, including cases where a classical transform method is not available.  Another feature of the UTM is that steps 1 and 2 remain the same for different boundary conditions. This is in contrast to other transform methods, where the required transform depends on the boundary condition. For example, when solving the heat equation on $\R^+\times(0,T)$, the sine/cosine transform is needed for the Dirichlet/Neumann problem respectively. Fokas and Spence provide a more detailed overview in \cite{FokSpen} and a pedagogical introduction is given in \cite{Paras}. 
For linear elliptic problems, we generally require complex-analytical tools to eliminate the unknown boundary values. In some special cases, we can do so by exploiting symmetries. For eigenvalue problems such as \eqref{eqn:evaluegen}, the eigenvalue appearing in the equation constitutes an additional unknown; thus, additional information is required. This information is provided by a judicious choice of ansatz for the form of the unknown boundary data. This requirement, as shown in what follows, is a severe limitation to the use of the UTM to characterise all eigenfunctions on non-canonical polygons.

\subsection{The Implementation of the UTM for the Eigenvalue Problem}

It is useful to identify $\O\subset\R^2$ with its image under the map $(x,y)\mapsto x+iy\in\C$ and we will often write $u(x+iy)$ to mean $u(x,y)$. Consider the rewriting of \eqref{eqn:evaluegen} in the complex plane for a function $u:\O\subset\C\to \R$:
\begin{equation}
    \begin{cases}
        \d_{z}\d_{\z}u(z)+\L u(z)= 0 &z\in \O\\
        u(z) = 0&z\in\d \O,
    \end{cases}
\label{eqn:Helmholtz}
\end{equation}
where $\L = \lambda/4$ because $\d_{\z}\d_z = \frac14 \Delta$. 

\textbf{Step 1:}
To write \eqref{eqn:Helmholtz} in divergence form, we multiply by an unspecified function $v$ and move derivatives from $u$ to $v$:
\begin{align}
    2u_{z\z}v + 2\L uv &= (u_zv)_{\z} + (u_{\z}v)_z - u_z v_{\z} - u_{\z}v_z + 2\L uv\nn\\
    &= (u_zv)_{\z} + (u_{\z}v)_z - (uv_{\z})_z - (u v_z)_{\z} + 2uv_{z\z} + 2\L uv\nn\\
    &= (u_{\z}v -uv_{\z})_z + (u_z v - uv_z)_{\z} + 2u(v_{z\z} + \L v).\nn
\end{align}
This is written in divergence form if we can pick a function $v$ such that $v_{z\z} + \L v =0$. The latter is our original problem but unconstrained by boundary conditions. We posit a separable ansatz $v(z,\z) = p(z)q(\z)$ and seek non-constant functions $p$ and $q$ such that
\begin{equation}
    p'(z)q'(\z) + \L p(z)q(\z) = 0\implies \frac{p'(z)}{p(z)} = -\L \frac{q(\z)}{q'(\z)}.
    \label{eqn:sov}
\end{equation}
The left-hand side of \eqref{eqn:sov} is a function of only $z$ while the right-hand side is a function of only $\z$, meaning both are constant. By convention, we choose $p' = -ikp$ and $q' = (\L/ik)q$ where $k\in\C\setminus\{0\}$ is an arbitrary parameter, meaning 
$$v(z,\z) = p(z)q(\z) = \exp\left[-ikz + \frac{\L}{ik}\z\right]$$
and the divergence form of equation \eqref{eqn:Helmholtz} is
\begin{equation}
    (u_{\z}v -uv_{\z})_z + (u_z v - uv_z)_{\z} = \d_{z}\left(\exp\left[-ikz+\frac{\L}{ik}\z\right]\left(u_{\z}-\frac{\L}{ik}u\right)\right) + \d_{\z}\left(\exp\left[-ikz+\frac{\L}{ik}\z\right](u_z+iku)\right) = 0.
    \label{eqn:divform}
\end{equation}
\textbf{Step 2:}
We find the global relation by integrating \eqref{eqn:divform} over $\O$ and applying Green's theorem:
\begin{equation}
    \int_{\d \O} \left\{\left(\exp\left[-ikz+\frac{\L}{ik}\z\right](u_z(z)+iku(z))\right)\,dz- \left(\exp\left[-ikz+\frac{\L}{ik}\z\right]\left(u_{\z}(z)-\frac{\L}{ik}u(z)\right)\right)\,d\z\right\} = 0.
    \label{eqn:GR0}
\end{equation}
\textbf{Step 3:} 
There are two sides to solving the eigenvalue problem with the UTM. The first is to derive the eigenfunction given an eigenvalue. This process is formal in the sense that it \textit{a priori} assumes that a solution exists and this assumption is later justified by showing that the derived expression is indeed a solution. When solving the eigenvalue equation in a convex polygon, Fokas showed \cite[Proposition 1.3, ][]{Fok01} that an expression containing the known and unknown boundary values solves the equation. In \cite[Proposition 3.1, ][]{Fok01}, it is shown that, for general convex polygons, the unknown boundary values can be eliminated to find a solution by solving a Riemann-Hilbert problem. However, for certain symmetric polygons, the unknown boundary values can be eliminated by purely algebraic manipulations using symmetries of the global relation. This is done for the equilateral triangle in \cite{Dass} and the right isosceles triangle in \cite{Fok01}.

As mentioned above, while the method is equipped to solve a PDE with known coefficients, when both a coefficient (the eigenvalue) and the solution function are unknown, we need to provide additional information in the form of an ansatz for the unknown boundary data. This is what Fokas and Kalimeris did in \cite{FokKal} to explicitly find the eigenvalues of the Laplacian posed on the equilateral triangle with a variety of boundary conditions. This result is a special case, as they exploited both an informed ansatz and a high degree of symmetry in the problem.

To implement the steps of the method in the case of a convex polygon, we first fix a parametrization. Let $\{z_1,\cdots,z_n\}\subset\C$ be the corners of a convex $n$-gon labelled clockwise. The $j$th edge joins $z_{j-1}$ to $z_j$ where $z_0 = z_n$. Define the length of the $j$th edge to be $l_j = |z_j - z_{j-1}|$ and the direction $\o_j = (z_j - z_{j-1})/l_j$. We parametrise the edge by arc length as $z^j(s) = z_{j-1} + s\o_j$ for $s\in[0,l_j]$ and denote $u^j(s) = u(z^j(s))$. With this parametrisation, and taking into account the Dirichlet boundary condition, the global relation \eqref{eqn:GR0} reads   
\begin{align}
   0= &\sum_{j=1}^n \int_{z_{j-1}}^{z_j} \exp\left[-ikz + \frac{\L}{ik}\overline{z}\right]u_z(z)\,dz -\exp\left[-ikz + \frac{\L}{ik}\overline{z}\right]u_{\z}(z)\,d\z\nn\\
    =&\sum_{j=1}^n\int_0^{l_j}\left\{ \exp\left[-ikz^j(s) + \frac{\L}{ik}\overline{z^j(s)}\right]u^j_z(s)\,\frac{dz^j(s)}{ds} -\exp\left[-ikz^j(s) + \frac{\L}{ik}\overline{z^j(s)}\right]u^j_{\z}(s)\,\frac{d\z^j(s)}{ds}\right\}ds\nn\\
    =&\sum_{j=1}^n\int_0^{l_j}\exp\left[-ik(z_{j-1} + \o_j s) + \frac{\L}{ik}(\overline{z_{j-1}}+\overline{\o_j}s)\right](u^j_z(s)\o_j - u^j_{\z}(s)\overline{\o_j})\,ds\nn\\
    =&\sum_{j=1}^n\exp\left[-ikz_{j-1} + \frac{\L}{ik}\overline{z_{j-1}}\right]\int_0^{l_j} \exp\left[(-ik\o_j + \frac{\L}{ik}\overline{\o_j}s\right](u^j_z(s)\o_j - u^j_{\z}(s)\overline{\o_j})\,ds.\nn
\end{align}
We denote the outward normal derivative of $u$ to the $j$th edge by $\d_\nu u^j(s) = \nu \cdot \nabla u\big|_{z^j(s)}$ and examine the derivatives of $u$:
\begin{equation*}
    u^j_z\o_j - u^j_{\z}\overline{\o_j} = u^j_z\o_j - \overline{u^j_z\o_j} = 2i\text{Im}(u^j_z\o_j) = 2i\d_\nu u^j.
\end{equation*}
If we define $f(k;z) = -ikz + \frac{\L}{ik}\bar{z}$ then the global relation can be written
\begin{equation}
    \sum_{j=1}^n \exp(f(k;z_{j-1}))\int_0^{l_j}\exp(f(k;\o_j)s)\d_\nu u^j(s)\,ds = 0.
    \label{eqn:GR1}
\end{equation}
This is the central tool in the analysis of the eigenvalue problem: the power of this equation is that holds for all complex values of $k$. We have transformed the analysis from the physical variable $z$ to the `spectral variable' $k$. We can propose an ansatz for the normal derivative, such as a sum of trigonometric functions in the case of the canonical polygons. In this case we can compute the integral explicitly and separate out the resulting terms using the linear independence of the functions $\exp(f(k;\o_j))$. What follows is a set of rational equations in $k$ that, by equating the locations of the poles and the residues at each pole, fix the parameters in the ansatz and the eigenvalues. This method also recovers the eigenvalues on the square. 

We want to use the global relation to establish the continuity of the normal derivative on convex polygons. While this follows from Theorem~\ref{thm:thm}, we include it to demonstrate a style of argument used in the UTM framework. The conclusion relies on properties of the normal derivative guaranteed by the result of Lemma \ref{lemma:2.2}, which are not evident from the UTM. Integrating equation~\eqref{eqn:GR1} by parts, we find
\begin{equation}
    \sum_{j=1}^n\frac{\exp(f(k;z_{j-1}))}{f(k;\o_j)} \left\{\exp(f(k;\o_j)l_j)\d_\nu u^j({l_j})-\d_\nu u^j(0) -\int_0^{l_j}\exp(f(k;\o_j)s)\frac{d\d_\nu u^j(s)}{ds}\,ds\right\} =0,
    \label{eqn:GR2}
\end{equation}
where the endpoints are understood as one-sided limits. As the above equation holds for all values of $k$, we proceed by studying the asymptotic behaviour of the individual terms as $k\to\infty$ along different rays in the complex plane. Firstly as $k\to\infty$,
\begin{equation*}
    \exp(f(k;z)) = e^{-ikz}e^{-i\L \z/k} = e^{-ikz}(1 + O(k^{-1})).
\end{equation*}
As $|k|\to\infty$, if ${\rm Im}(kz) =0 $ then  $e^{-ikz} = O(1)$, otherwise $e^{-ikz}$ will grow or decay exponentially. Therefore the first two bracketed terms of \eqref{eqn:GR2} are $O(1)$ as $k\to\infty$ along a ray such that $k\o_j\in\R$. As $k\to\infty$ we can neglect the $e^{-i\L \z/k}$ term. In order to invoke the Riemann-Lebesgue lemma, we require $\frac{d\d_\nu u^j(s)}{ds}\in L^1((0,l_j))$.
Because of the smoothness of the boundary, we have for any $\eps>0$
$$\int_\eps^{l_j-\eps}\left|\frac{d\d_\nu u^j(s)}{ds}\right|ds < \infty.$$
We focus on the left endpoint but the argument for the right endpoint is identical. One can pick $\eps>0$ sufficiently small that $\frac{d\d_\nu u^j(s)}{ds}$ is non-negative on $(0,\eps)$ as $\frac{d\d_\nu u^j(s)}{ds}$ has polynomial behaviour around 0 by Equation~\eqref{eqn:kderiv}. Thus
$$\int_0^{\eps}\left|\frac{d\d_\nu u^j(s)}{ds}\right|ds = \int_0^{\eps}\frac{d\d_\nu u^j(s)}{ds}ds = \d_\nu u^j(\eps)-\d_\nu u^j(0) < \infty,$$
under the assumption that $r^{\pi/\a-1}\to0$ as $r\to 0$ which is equivalent to the polygon being convex. Thus, by virtue of the Riemann-Lebesgue lemma,
\begin{equation*}
    \int_0^{l_j} \exp(-ik\o_js)\frac{d\d_\nu u^j(s)}{ds} \,ds\to0\text{ as } k\to\infty.
\end{equation*}
Every other term in the sum with $\o_l\neq\o_j$ either grows or decays exponentially. The integral term decays to zero while the terms containing the normal derivative evaluated at the endpoints remain bounded as $k\to\infty$. For the sum to equal zero for all $k$, the bounded terms must sum to zero and the integral terms must sum to zero. Using this and $f(k;z_{j-1})+f(k;\o_{j-1})l_j =f(k;z_{j-1})+f(k;z_j -z_{j-1})  = f(k;z_j)$ we have
\begin{align}
    &\sum_{j=1}^n\left\{ \frac{\exp(f(k;z_j))\d_\nu u^j({l_j})}{f(k;\o_j)}-\frac{\exp(f(k;z_{j-1}))\d_\nu u^j(0)}{f(k;\o_j)}\right\} = 0.\nn
\end{align}
Re-indexing the second term by mapping $j\mapsto j+1$, we find
\begin{align}
\end{align}
where $\d_\nu u^j({l_j})$ is evaluated at the same corner as $\d_\nu u^{j+1}(0)$. Finally, by the linear independence of the functions $\exp(f(k;z_j))$ as functions of $k\in\C$ this sum vanishes only if each of the coefficients does, meaning
\begin{equation*}
    f(k;\o_{j+1})\d_\nu u^j({l_j}) - f(k;\o_j)\d_\nu u^{j+1}(0) =0 \text{ for all } j=0,\dots,n-1
    \label{eqn:coeffvanish}
\end{equation*}
and once again for this to be true for all $k\in\C$ we must have
\begin{equation*}
    \lim_{r\to l_j}\d_\nu u^j(r) = \lim_{r\to 0} \d_\nu u^{j+1}(r)
\end{equation*}
for all $j.$
The breakdown of infinite differentiability at the corners of non-canonical polygons in Proposition~\ref{thm:cor} demonstrates why a non-smooth ansatz is necessary to generalise the UTM to finding explicit eigenvalues of arbitrary polygons. However, all we have to inform such an ansatz is the asymptotic behaviour of the eigenfunction near the corners.
\begin{remark}
    It is well-known that the eigenfunctions of the Dirichlet Laplacian form a complete set in $L^2(\O)$, however Lam\'e did not prove that his given set of eigenfunctions were complete. This was later established by Pinsky \cite{Pinsky}, whose proof uses the known completeness of the eigenfunctions of the rectangle. In \cite{FokKal}, Fokas and Kalimeris indicate that the completeness of the eigenfunctions they derived follows from employing the UTM to solve the heat equation in the interior of an equilateral triangle with Dirichlet boundary conditions and arbitrary initial data $q_0(x,y)$ \cite{HeatEqui}; evaluating the solution at $t=0$ gives the complete spectral decomposition of the arbitrary function $q_0$. However, some regularity of $q_0$ is required for the pointwise evaluation of the solution to be valid.
\end{remark}

\section{Reflection Principles and Higher Eigenfunctions}
\label{sect:unfold}
We include two counter-examples to explain why the third and fourth statements of Theorem~\ref{thm:thm} do not generalise to higher eigenfunctions.

We begin by describing a way of constructing eigenfunctions on `unfolded' domains. Given a domain $\O$ contained entirely in $\{x<0\}$ with a portion of its boundary on the line $x =0$, we can define the `unfolded' domain $\hat{\O}$ to be the union of $\O$ and its reflection in the line $x = 0$\footnote{We stipulate that the unfolded domain contains the line of symmetry.}. For a Dirichlet eigenfunction $u$ with eigenvalue $\l$ on $\O$, we define the function 
\begin{equation*}
    \hat u (x,y) =
    \begin{cases}
        u(x,y) &x<0\\
        -u(-x,y) & x> 0.
    \end{cases}
\end{equation*}
By the invariance of the Laplacian under $x\mapsto -x$, the function $\hat u$ is a Dirichlet eigenfunction on $\hat{\O}$ with eigenvalue $\l$: we refer to $\hat{u}$ as the `unfolded' eigenfunction. For example, Figure~\ref{fig:unfold} shows the unfolding of eigenfunctions $u$ on the hemi-equilateral triangle $\O$ to form eigenfunctions $\hat u$ on the triangle $\hat\O$ with angles $(\pi/6,\pi/6,2\pi/3)$.

This unfolding principle can be used to prove the first statement of Theorem~\ref{thm:thm} for convex domains. If $\pi/\a\in \N$, unfold the domain $\pi/\a$ times to form a straight edge and unfold once again in this straight edge so that the corner is an interior point of the resulting domain, at which the eigenfunction must be $C^\infty$ \cite{KuttSig}. Our local argument is also valid for non-convex domains.

We can unfold any eigenfunction from $\O$ to $\hat\O$. Thus, it is natural to ask: do all eigenfunctions on $\hat\O$ arise from unfolding? Courant's nodal domain theorem gives us the answer: the first eigenfunction is positive on the whole domain and the second eigenfunction, $u_2$, divides its domain into two connected components on which $u_2$ has a sign. Thus an unfolded eigenfunction cannot be the first as it vanishes on the line of symmetry; moreover, the unfolding of the first eigenfunction on $\O$ produces the second eigenfunction on $\hat\O$.

We can classify the eigenfunctions of a domain with $x=0$ as a line of symmetry. Given an eigenfunction $u$, decompose $u (x,y) = \frac{1}{2}(u(x,y)+ u(-x,y)) + \frac12(u(x,y) - u(-x,y))$ into symmetric and anti-symmetric eigenfunctions. Thus we can partition the eigenfunctions of a domain with a line of symmetry into symmetric ($u(-x,y)=u(x,y)$) or anti-symmetric ($u(-x,y)=-u(x,y)$) about the line of symmetry \cite{KuttSig}. By Courant's theorem, the first eigenfunction must be symmetric and the second must be anti-symmetric.

Returning to the example of polygons, Figure~\ref{fig:unfold} shows a non-canonical triangle with infinitely many trigonometric eigenfunctions. We now know that the first eigenfunction, and in fact any symmetric eigenfunction, cannot be produced in this way. Indeed, consider the local expansion of $\hat u$ about the corner of angle $2\pi/3$:
\begin{equation*}
    \tilde{\hat{u}}(r,\th) = \sum_{n=1}^\infty a_n J_{3n/2}(\sqrt{\l}r)\sin(3n\th/2).
\end{equation*}
Only the fractional Bessel functions obstruct infinite differentiability. As the unfolded eigenfunction are antisymmetric about $\th = \pi/3$, the coefficients
\begin{equation*}
    a_n = \frac{3}{\pi J_{3n/2}(\sqrt{\l}\rho)}\int_0^{2\pi/3} \tilde{u}(\rho,\phi)\sin(3n\phi/2)\,d\phi
\end{equation*}
vanish when $n$ is odd, thus the eigenfunction is infinitely differentiable at the corner $2\pi/3$.

This observation implies the following important fact about the applicability of the UTM. An initial attempt to extend the UTM to finding eigenvalues for non-canonical polygons might involve an ansatz of an arbitrary sum of trigonometric functions for the normal derivative. While this might allow the UTM to find the trigonometric eigenvalues on polygons which arise from successive unfolding of canonical polygons, a  significant portion of the spectrum would be omitted. This omission would not become apparent until the corresponding eigenfunctions had been derived and the nodal sets had been calculated, which would reveal that the first eigenvalue was missing.

If we take this non-canonical triangle and once again unfold it along its shorter side we generate a chevron, as in Figure~\ref{fig:unfold}. At the re-entrant corner, the normal derivative of the second eigenfunction tends to zero, rather than infinity. This is a clear demonstration that the second half of Theorem~\ref{thm:thm} does not hold for all eigenvalues. Once again, the symmetry of the domain means that the first coefficient in the local expansion of $\tilde{\hat u}$ vanishes (and so too do all odd coefficients). Information about higher eigenfunctions will hopefully be gained in future work.

\begin{figure}
    \centering
    \begin{tikzpicture}
        \coordinate (B) at (-3,0);
        \coordinate (C) at (3,0);
        \coordinate (A) at (0,{3*tan(30)});
        \draw[thick] (A) -- (B) -- (C) -- cycle;
        \node at (-1,0.5) {\Large{\textbf{+}}};
        \node at (1,0.5) {\LARGE{\textbf{-}}};
        \draw[dashed] (A) -- (0,0);
        \draw (A) ++(-150:0.6) arc[start angle=-150,end angle=-30,radius=0.6];
        \node at (0,1.0) {$2\pi/3$};
        \draw (B) ++(0:0.5) arc[start angle=0,end angle=30,radius=0.5];
        \node at (-2,0.25) {$\pi/6$};
        \draw (C) ++(180:0.5) arc[start angle=180,end angle=150,radius=0.5];
        \node at (1.9,0.25) {$\pi/6$};
    
    \end{tikzpicture}
    \begin{tikzpicture}
        \coordinate (A) at (-3,0);
        \coordinate (B) at (0,{3*tan(60)});
        \coordinate (C) at (3,0);
        \coordinate (D) at (0,{3*tan(30)});
        \node at (-1,2.2) {\Large{\textbf{+}}};
        \node at (1,2.2) {\LARGE{\textbf{-}}};
        \draw[thick] (A) -- (B) -- (C) -- (D) -- cycle;
        \draw[dashed] (B) -- (D);
    \end{tikzpicture}   
    \caption{The triangle with angles $(2\pi/3,\pi/6,\pi/6)$ and the chevron resulting from its unfolding}
    \label{fig:unfold}
\end{figure}

\textbf{Acknowledgements:} I would like to thank my supervisors Prof. Lyonell Boulton and Prof. Beatrice Pelloni for their continued support and guidance.

\newpage
\printbibliography

\end{document}